\newtheorem{remark}{Remark}
\newtheorem{theorem}{Theorem}
\begin{document}

\runningheads{\ A simple mathematical model for unemployment: 
a case study in Portugal with optimal control}{A. Galindro and D. F. M. Torres}

\title{A simple mathematical model for unemployment:\\ 
a case study in Portugal with optimal control\footnote[2]{Part 
of first author's M.Sc. thesis, carried out at 
University of Aveiro under the Master Programme 
\emph{Mathematics and Applications}.}}

\author{An\'ibal Galindro\affil{1},
Delfim F. M. Torres\affil{2}\corrauth}

\address{\affilnum{1}Centre for Transdisciplinary Development Studies,
University of Tr\'as-os-Montes and Alto Douro, Polo II -- ECHS, 
Quinta de Prados, 5000-801 Vila Real, Portugal.
\affilnum{2}Center for Research and Development in Mathematics and Applications (CIDMA),
Department of Mathematics, University of Aveiro, 3810-193 Aveiro, Portugal.}

\corraddr{delfim@ua.pt (Delfim F. M. Torres).}


\begin{abstract}
We propose a simple mathematical model for unemployment.
Despite its simpleness, we claim that the model is more 
realistic and useful than recent models available in the literature. 
A case study with real data from Portugal supports our claim.
An optimal control problem is formulated and solved,
which provides some non-trivial and interesting conclusions.
\end{abstract}

\keywords{Mathematical modelling; Unemployment;
Unemployment in Portugal in the period 2004-2016;
Numerical simulations; Optimal control with state constraints.}


\maketitle


{\small \noindent{\bf Mathematics Subject Classification 2010\ \ } 
91B64, 91B74 (Primary); 34C60, 49K15 (Secondary).}


\section{Introduction}

Unemployment is an extremely severe social and economic problem, 
born from the difference between demand and supply of the labour market 
and sometimes emphasized by population's growth. The unemployed population can be defined 
as the portion of able citizens that desire to work (known as the active population) 
but, unfortunately, due to insufficient supply, are deprived from working. 
Generally, unemployment is a precarious social situation since a portion 
of the population normally struggles to maintain a minimum welfare 
and consumption level. Simultaneously, regarding the macroeconomic perspective, 
higher unemployment rates intensify the pressure on social protection measures, 
e.g., unemployment subsidies, and, consequently, the associated government expenditure. 
There are numerous policies that interact directly with a country's level of unemployment 
or, symmetrically, with the level of employment offered to the citizens. 
As an example, we have the establishment of a minimum wage and currency devaluation. 
Here, we propose and investigate a simple new model that describes 
well the current labour market in Portugal. 

Regarding available literature focused on the subject, we go back to 2011, 
time when Misra and Singh \cite{ms1} proposed a non-linear mathematical 
model of unemployment based on Nikolopoulos and Tzanetis previous work of 2003 \cite{nt}.
More precisely, they suggested a model for housing allocation of a homeless population 
due to a natural disaster, described by a system of ordinary differential equations 
with the following three variables: unemployed population, employed individuals, 
and temporarily workers \cite{ms1}. They analyse the equilibrium of the model,  
using the stability theory for differential equations, and perform a few numerical simulations, 
concluding that the unemployment battle may need immediate measures: they predict 
that the unemployment rate may rise quickly and, if those high unemployment 
values are reached, then it might be very difficult to overcome that much bigger 
problem in the future \cite{ms1}. Misra and Singh developed further their empirical work, 
and in 2013 they replaced the temporarily employed variable by newly created vacancies 
with a delayed feature \cite{ms2}. Another non-linear mathematical model of unemployment 
was then proposed by S\^{\i}rghi et al. in 2014, based on a differential 
system with distributed time delays \cite{sa}. Moreover, they also considered 
the unemployment level as a signal to the employers to hire at a lower wage \cite{sa}. 
The split of the vacancies variable into current and government created vacancies, 
was also a significant modification to the previous Misra and Singh models \cite{ms1,ms2}. 
Recently, Harding and Neam\c{t}u also followed the ideas of Misra and Singh \cite{ms1,ms2}, 
extending further the previous efforts by presenting an unemployment model where job search 
is open both to native and migrant workers \cite{hn}. They consider two policy approaches: 
the first that aims to reduce unemployment by observing both past values of unemployment and migration, 
the second considering the past values of unemployment alone \cite{hn}. Similarly, by considering 
the previously mentioned bibliography, Munoli and Gani (2016) seek to define 
an optimal control policy to unemployment through two possible measures: 
government policies, focused at providing jobs directly to unemployed persons,  
and government policies, aiming the creation of new vacancies \cite{mg}. 
The dynamics \cite{mg} of the (un)employment market are defined by three differential equations, 
referring those to variation on unemployed/employed people and vacancies available, 
quite similarly to Misra and Singh (2013) \cite{ms2}. Here, taking into account 
the Portuguese unemployment reality between 2004 and 2016,
we propose a few changes in Munoli and Gani (2016) model \cite{mg}
and apply optimal control to discuss suitable policies for avoiding unemployment.

The remainder of the paper is organized as follows. Section~\ref{sec:2}
presents monthly real data for the unemployed Portuguese population 
in the time period from January 2004 to June 2016. 
We show that the model of \cite{mg}, suggested by Munoli and Gani in 2016,
is not suitable to describe the Portuguese reality. 
In Section~\ref{sec:3}, we propose a new mathematical model 
that explains more accurately the unemployment environment
lived in Portugal. The model has one equilibrium point only, 
which is asymptotically stable under some reasonable conditions
(see Theorem~\ref{thm:01}). Intervention policies are investigated 
in Section~\ref{sec:4} from an optimal control perspective. 
We end with Section~\ref{sec:5} of discussion and conclusions.
In appendices, we provide the developed software code for 
the numerical simulations.


\section{Data analysis and the Munoli and Gani 2016 model}
\label{sec:2}

As part of our goal, we collected data from IEFP 
(Instituto do Emprego e Forma\c{c}\~ao Profissional), 
a Portuguese government institution focused on providing education 
and support for the unemployed population \cite{iefp} 
and Bank of Portugal \cite{bport}. We compiled the number of unemployed persons ($U$), 
the unemployment rate (${UR}$) and vacancies available at the IEFP ($D$), 
concerning the time period from January 2004 until June 2016 with monthly frequency. 
A total of 150 observations from each variable were collected, $U_t$, ${UR}_t$, $D_t$, 
$t = 1,\ldots,150$, and three new derived variables were defined:
\begin{itemize}
\item employed people (total number)
\begin{eqnarray}
\label{eq:emp}
E_t = \frac{U_t(1-UR_t)}{UR_t};
\end{eqnarray}
	
\item unemployment change rate
\begin{eqnarray}
\label{eq:unemp}
RCU_t = \frac{U_t-U_{t-1}}{U_{t-1}};
\end{eqnarray}
	
\item employment change rate
\begin{eqnarray}
\label{eq:emp:rate}
RCE_t = \frac{E_t-E_{t-1}}{E_{t-1}}.
\end{eqnarray}
\end{itemize}
Formulas \eqref{eq:unemp} and \eqref{eq:emp:rate} indicate 
the rate of change of unemployed and employed people, respectively. 
We could not obtain the total number of employees in Portugal 
during our time frame. Therefore, we proceeded with an indirect 
calculation \eqref{eq:emp} using the variables that we actually could gather: 
the unemployment rate $UR$ and the total number 
of unemployed people $U$. 

Munoli and Gani (2016) \cite{mg} present a model that tries 
to emulate an unemployment environment. Their model 
consists of three ordinary differential equations, 
considering the unemployed ($U$), 
the employed ($E$), and available vacancies ($V$):
\begin{eqnarray}
\label{eq:M:G}
\begin{cases}
\displaystyle \frac{dU(t)}{dt}
=\Lambda-\kappa U(t) V(t)- \alpha_1 U(t) + \gamma E(t),\\[0.3cm]
\displaystyle \frac{dE(t)}{dt}
=\kappa U(t) V(t)- \alpha_2 E(t) - \gamma E(t),\\[0.3cm]
\displaystyle \frac{dV(t)}{dt}
=\alpha_2 E(t) + \gamma E(t) - \delta V(t) + \phi U(t).
\end{cases}
\end{eqnarray}
The parameters and variables of model \eqref{eq:M:G} are described on Table~\ref{tab:1}. 
\begin{table}[ht]
\caption{\label{tab:1} \small Variables and parameters considered 
by Munoli and Gani (2016) \cite{mg}.}
\centering
\begin{tabular}{c|l}
Variable & Meaning \\\hline
$U(t)$ & Number of unemployed individuals at time $t$ \\
$E(t)$ & Number of employed individuals at time $t$ \\
$V(t)$ & Number of vacancies at time $t$ \\
$\Lambda$ & Number of unemployed individuals that is increasing continuously \\
$\kappa$ & Rate at which the unemployed individuals are becoming employed \\
$\alpha_1$ & Rate of migration and death of unemployed individuals \\
$\alpha_2$ & Rate of retirement as well as death of employed individuals \\
$\gamma$ & Rate of individuals who are fired from their jobs \\
$\phi$ & Represents the rate of creating new vacancies \\
$\delta$ & Denotes the diminution rate of vacancies due to lack of funds
\end{tabular}
\end{table} 
At Munoli and Gani (2016) \cite{mg}, the initial conditions were given
by $U(0) = 10000$, $E (0) = 1000$ and $V (0) = 100$. 
A time line of 150 units ($t = 150$) was considered \cite{mg}. 
Values of the other parameters were fulfilled according with Table~\ref{tab:n2}.  
\begin{table}[ht]
\caption{\label{tab:n2}\small Variables and parameters considered 
by Munoli and Gani (2016) \cite{mg}.}
\centering
\begin{tabular}{c|c|c}
Parameters & Base Value & Reference \\\hline
$\Lambda$ & 5000 & Misra and Singh (2013) \cite{ms2} \\
$\kappa$ & 0.000009 & Misra and Singh (2013) \cite{ms2} \\
$\alpha_1$ & 0.04 & Misra and Singh (2013) \cite{ms2} \\
$\alpha_2$ & 0.05 & Misra and Singh (2013) \cite{ms2} \\
$\gamma$ & 0.001 & Assumed \cite{mg} \\
$\phi$ & 0.007 & Assumed \cite{mg}\\
$\delta$ & 0.05 & Assumed \cite{mg}
\end{tabular}
\end{table} 
We replaced the initial conditions suggested by the authors of \cite{mg}
by the ones given by the real data from Portugal. Precisely, 
we fixed the initial values to the ones of Portugal at January 2004, 
when the number of unemployed people was 464450 ($U$), 
employed was 6450694 ($E$, according with \eqref{eq:emp}) 
and the available vacancies at the time were $4848$ ($V$).  
\begin{figure}[!htb]
\centering
\subfloat[\footnotesize{Employed population}]{\label{Emprego}
\includegraphics[width=0.45\textwidth]{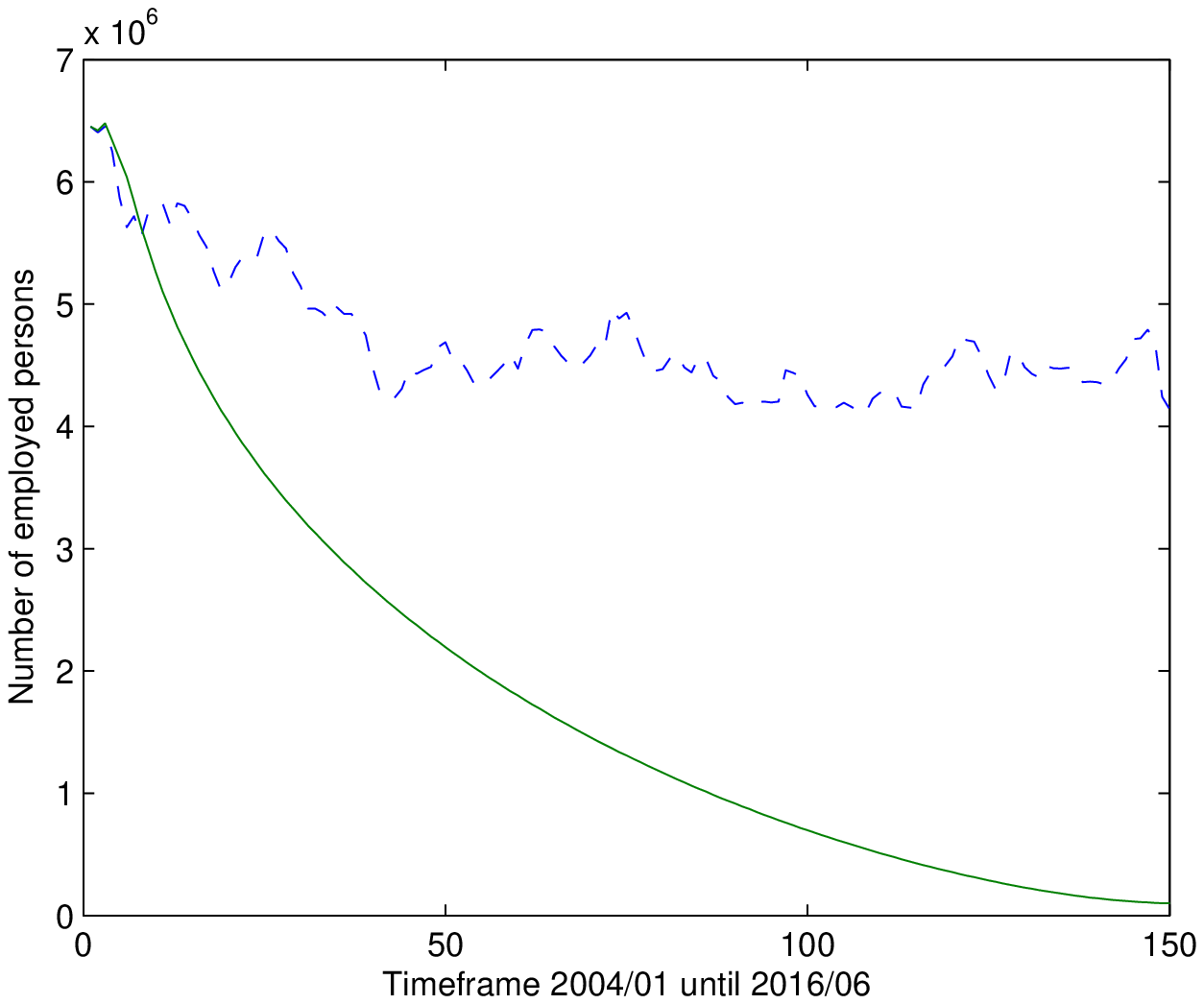}}
\subfloat[\footnotesize{Unemployed population}]{\label{Desemprego}
\includegraphics[width=0.45\textwidth]{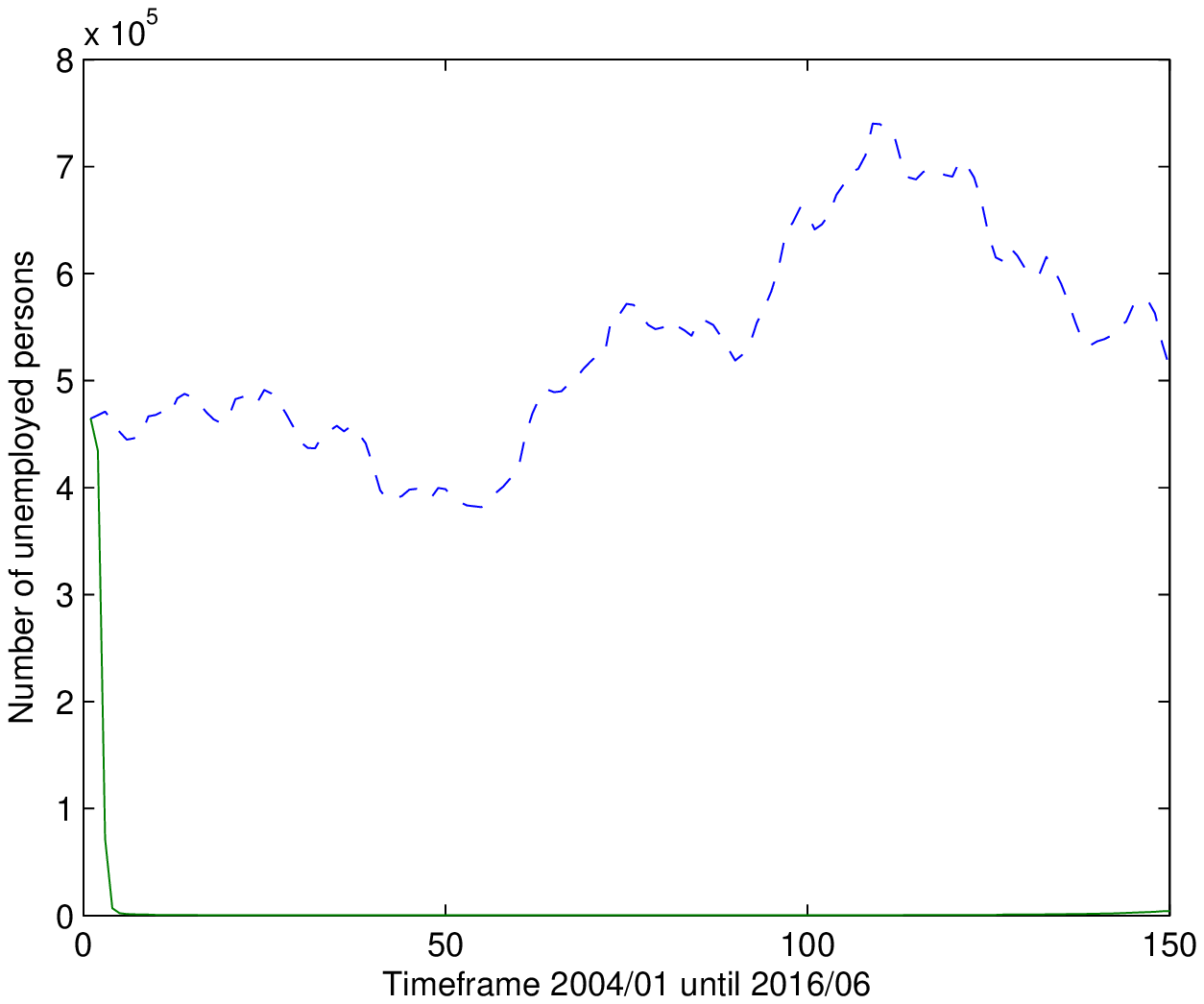}}\\
\subfloat[\footnotesize{Total vacancies}]{\label{Vagas}
\includegraphics[width=0.45\textwidth]{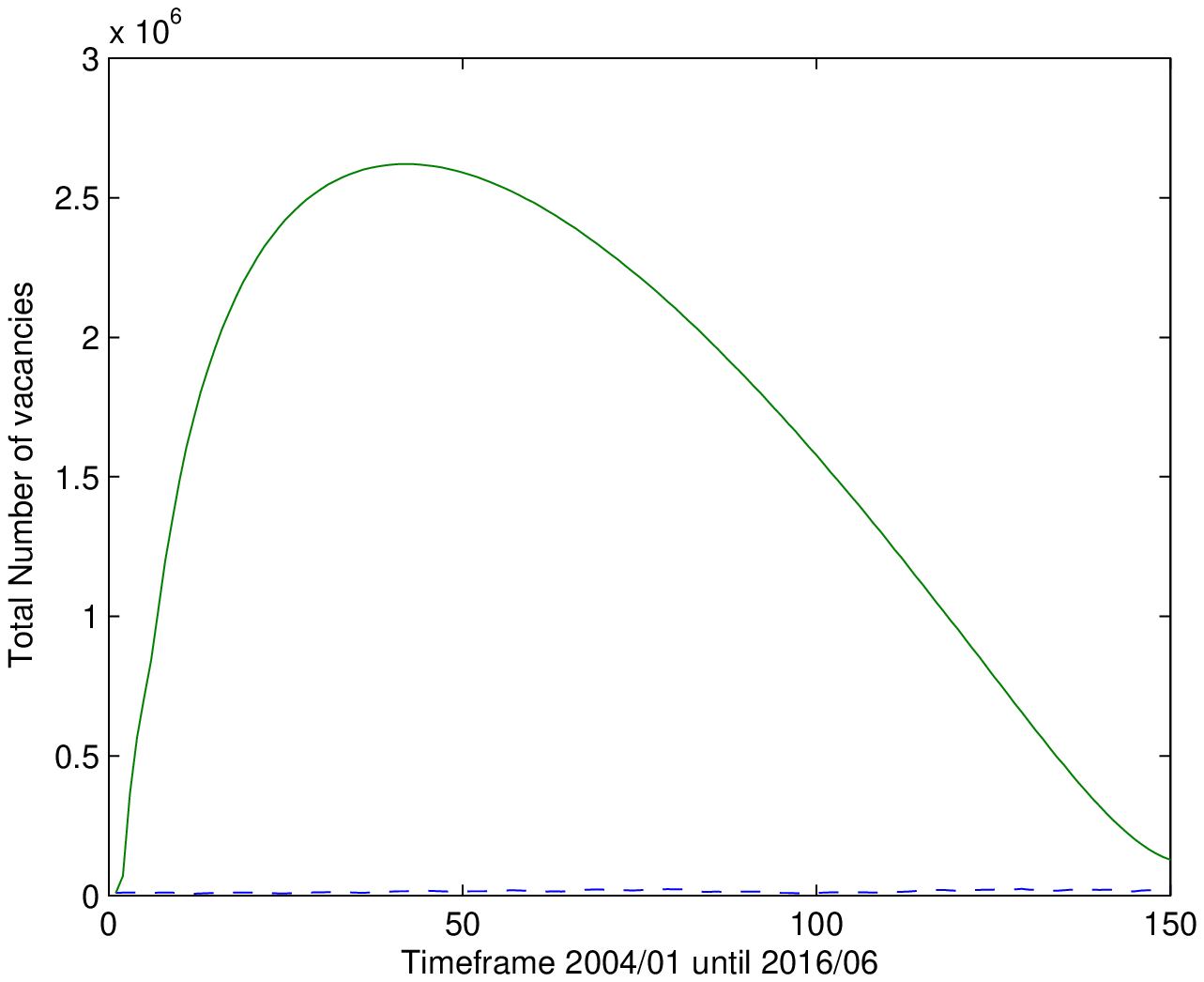}}
\caption{\small Real data (dashed--blue line) versus prediction from Munoli 
and Gani model of 2016 \cite{mg} (continuous--green line).}
\label{fig:01}
\end{figure}
We can easily state that the suggested model from Munoli and Gani (2016) \cite{mg} 
does not replicate properly the real data from Portugal. 
Regarding the employment and unemployment (see Figures~\ref{Emprego} 
and \ref{Desemprego}, respectively) the model of \cite{mg} kind of implode, 
since the unemployed/employment values dramatically drop until the end of the time period. 
Munoli and Gani (2016)~\cite{mg} differential system \eqref{eq:M:G}
also suggests an exceptional increase on the supply of job vacancies 
with a smoother decrease until the end of the time period, a statement that 
is not supported by the Portuguese real data. Actually, the real number of vacancies 
presents a smooth fluctuation and a shy tendency to increase over time
(see Figure~\ref{Vagas}).


\section{New unemployment model and simulations}
\label{sec:3}

Given the weak results of Section~\ref{sec:2}, our main goal is to create 
a new mathematical model that explains more accurately the unemployment environment. 
With this in mind, we proceed with a few changes in Munoli and Gani (2016) \cite{mg} model 
to achieve the desired result. First of all, we consider that the number of vacancies 
should be an exogenous variable and not given by any specific differential equation, 
as stated on previous works. The inherent fluctuation and apparent pattern lead us 
to fit our data in order to achieve a trustworthy representation of this variable. 
We fit our data with a 3rd degree Fourier function (see Appendix~\ref{sec:appendix}), 
obtaining a reasonable goodness of fit (precisely, a R-square of 0.8046, 
as given in Appendix~\ref{sec:appendix}). Figure~\ref{VagasFit} 
shows the fitted function and the actual vacancies data.
\begin{figure}[!htb]
\centering
\includegraphics[scale=0.5]{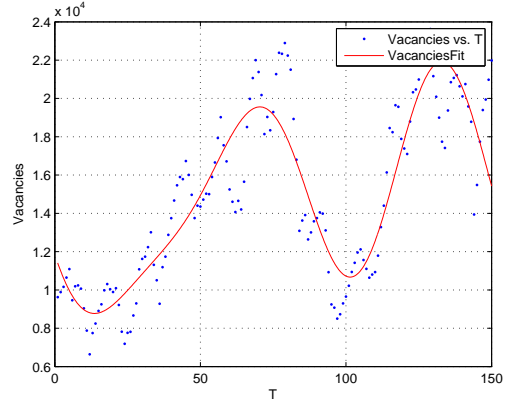}
\caption{\small Total vacancies. Real data (dots) and 
fitting by a 3rd degree Fourier function (continuous line).}
\label{VagasFit}
\end{figure}

Having in mind the results of S\^{\i}rghi et al. of 2014 \cite{sa}, 
we decided to include one variable that compiles the employment created 
due to the increase of the unemployment. Bigger unemployment rates 
signals employers to hire at a lower wage and, as a consequence, 
they are also able to hire more workers. We computed the $p$-values for 
Pearson's correlation using a Student's $t$-distribution for a 
transformation of the correlation using the \texttt{corr(X,Y)} command of 
\textsf{MatLab}. This function is exact when $X$ and $Y$ are normal.  
To retrieve such correlation value, we used the two 
transformed variables $RCU_t$ and $RCE_t$ (formulas \eqref{eq:unemp}
and \eqref{eq:emp:rate}), obtaining the value of 0.7161.    
We also found that the constant rate, at which the number of unemployed persons 
is increasing continuously, as assumed by Munoli and Gani (2016) \cite{mg} 
and Misra and Singh (2013) \cite{ms2}, is quite small regarding the Portuguese population. 
For this reason, we have increased the value to 90000. We also note that the other 
crucial point explaining the under-achievement of previous models  with respect
to Portuguese data (implosion and general shrinkness of the population, see Section~\ref{sec:2}), 
is the absence of a value of a constant rate at which the number of employed persons 
is increasing continuously in order to recoup the loss of people within the system.
Considering the stated above, we propose here the following model for unemployment, 
described by a system of two ordinary differential equations:
\begin{eqnarray}
\label{eq:our:model}
\begin{cases}
\displaystyle \frac{dU(t)}{dt}
=\Lambda-\kappa U(t) V(t)- \alpha_1 U(t) + \gamma E(t),\\[0.3cm]
\displaystyle \frac{dE(t)}{dt}
=\omega+\kappa U(t) V(t)- \alpha_2 E(t) - \gamma E(t)-\delta E(t) +\rho U(t).
\end{cases}
\end{eqnarray}
The meaning of the variables and parameters of our model \eqref{eq:our:model}
is given in Table~\ref{tab:our:model}. The values used in our simulations 
are given in Table~\ref{tab:our:values}.
\begin{table}[ht]
\caption{\label{tab:our:model} \small Variables and parameters 
of our mathematical model \eqref{eq:our:model}.}
\centering
\begin{tabular}{l|l}
Variable & Meaning \\\hline
$U(t)$ & Number of unemployed individuals at time $t$ \\
$E(t)$ & Number of employed individuals at time $t$ \\
$V(t)$ & Total vacancies at time $t$ \\
$\Lambda$ & Number of unemployed individuals that is increasing continuously \\
$\kappa$ & Rate at which the unemployed individuals are becoming employed \\
$\alpha_1$ & Rate of migration and death of unemployed individuals \\
$\alpha_2$ & Rate of retirement or death of employed individuals \\
$\gamma$ & Rate of persons who are fired from their jobs\\
$\omega$ & Number of employment created and fulfilled \\
$\delta$ & Denotes the diminution rate of vacancies due to lack of funds \\
$\rho$ & Rate of employment increase due to labour-force wage devaluation
\end{tabular}
\end{table} 
\begin{table}[ht]
\caption{\label{tab:our:values}\small Parameter values considered 
in the simulations of our model \eqref{eq:our:model}.}
\centering
\begin{tabular}{l|l|l}
Parameters & Base Value & Reference \\\hline
$\Lambda$ & 90000 & Assumed \\
$\kappa$ & 0.000009 & Misra and Singh (2013) \cite{ms2} \\
$\alpha_1$ & 0.04 & Misra and Singh (2013) \cite{ms2} \\
$\alpha_2$ & 0.05 & Misra and Singh (2013) \cite{ms2} \\
$\gamma$ & 0.001 & Munoli and Gani (2016) \cite{ms2} \\
$\omega$ & 90000 & Assumed \\
$\delta$ & 0.05 & Munoli and Gani (2016) \cite{ms2} \\
$\rho$ & 0.7161 & Assumed (according to variable correlation) \\		
\end{tabular}
\end{table} 

The feasible region of model \eqref{eq:our:model} is given by
\begin{equation}
\label{eq:Omega}
\Omega = \left\{(U, E) : 0 \leq U + E \leq \frac{\Lambda + \omega}{\alpha_m}\right\},
\quad \alpha_m = \min\left\{\alpha_1 - \rho, \alpha_2 + \delta\right\}.
\end{equation}
Next result gives the positivity invariance of the feasible region \eqref{eq:Omega}.

\begin{theorem}
The set $\Omega$ defined by \eqref{eq:Omega} is a region 
of attraction for the model system \eqref{eq:our:model}. 
\end{theorem}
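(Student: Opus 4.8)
The plan is to collapse the planar system \eqref{eq:our:model} to a single scalar differential inequality for the total population $N(t) := U(t) + E(t)$, and then to run a standard comparison (Gronwall-type) argument. Two things need to be established: that a trajectory issuing from the closed first quadrant stays there, so that the bound $0 \le U + E$ in \eqref{eq:Omega} is respected, and that the bound $U + E \le (\Lambda + \omega)/\alpha_m$ is both forward invariant and globally attracting.

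For the positivity, I would inspect the vector field of \eqref{eq:our:model} along the two coordinate semi-axes. On $\{U = 0,\ E \ge 0\}$ one has $\dot U = \Lambda + \gamma E \ge 0$, and on $\{E = 0,\ U \ge 0\}$ one has $\dot E = \omega + \kappa U V + \rho U \ge 0$, the latter using $V(t) \ge 0$ (which holds for the fitted vacancies function over the time horizon considered). Hence the flow points inward on the boundary of the first quadrant, and nonnegativity of $U$ and $E$ is preserved for all $t \ge 0$.

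For the bound on $N$, adding the two equations of \eqref{eq:our:model} cancels the bilinear terms $\pm\kappa U V$ and gives
\[
\frac{dN}{dt} = \Lambda + \omega - (\alpha_1 - \rho)\,U - (\alpha_2 + \delta)\,E \le \Lambda + \omega - \alpha_m N,
\]
where the inequality uses $U, E \ge 0$ together with the definition $\alpha_m = \min\{\alpha_1 - \rho,\ \alpha_2 + \delta\}$. Multiplying by the integrating factor $e^{\alpha_m t}$ and integrating from $0$ to $t$ yields
\[
N(t) \le \frac{\Lambda + \omega}{\alpha_m} + \left(N(0) - \frac{\Lambda + \omega}{\alpha_m}\right) e^{-\alpha_m t}.
\]
From this, if $N(0) \le (\Lambda + \omega)/\alpha_m$ then $N(t) \le (\Lambda + \omega)/\alpha_m$ for all $t \ge 0$, so $\Omega$ is forward invariant; and for arbitrary nonnegative initial data, $\limsup_{t \to \infty} N(t) \le (\Lambda + \omega)/\alpha_m$, so every trajectory enters $\Omega$ and remains in it. Combined with the positivity step, this shows that $\Omega$ is a region of attraction for \eqref{eq:our:model}.

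The step that requires care is not the calculus but the sign condition $\alpha_m > 0$, i.e. $\alpha_1 > \rho$: it is needed both for $\Omega$ to be a nonempty bounded set and for $e^{-\alpha_m t}$ to decay, and similarly one needs $V(t) \ge 0$ to close the positivity argument. Under these reasonable conditions the proof is the routine scalar comparison sketched above; I expect this sign hypothesis, rather than any analytic difficulty, to be the real content that must be flagged.
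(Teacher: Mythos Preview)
Your argument is the paper's: sum the two equations of \eqref{eq:our:model}, obtain the scalar inequality $\dot N \le \Lambda + \omega - \alpha_m N$ for $N = U + E$, and conclude $\limsup_{t\to\infty} N(t) \le (\Lambda+\omega)/\alpha_m$. You are in fact more careful than the paper on two points: you supply the positivity step (needed so that $(\alpha_1-\rho)U + (\alpha_2+\delta)E \ge \alpha_m N$ actually holds), and you correctly flag the hypothesis $\alpha_m > 0$, i.e.\ $\alpha_1 > \rho$, which the paper's proof tacitly uses but never states---and which, incidentally, fails for the numerical values in Table~\ref{tab:our:values}.
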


\begin{proof}
From the model equations \eqref{eq:our:model}, we get
$$
\frac{dU(t)}{dt} + \frac{dE(t)}{dt} 
= \Lambda + \omega - \alpha_1 U(t) +\rho U(t) 
- \alpha_2 E(t) -\delta E(t),
$$
which gives
$$
\frac{d}{dt} \left[ U(t) + E(t) \right]
\leq \Lambda + \omega - \alpha_m \left[ U(t) + E(t) \right],
$$
where $\alpha_m = \min\left\{\alpha_1 - \rho, \alpha_2 + \delta\right\}$.
Taking the limit supremum, one has
$$
\lim \sup_{t \rightarrow +\infty} \left[ U(t) + E(t) \right]
\leq  \frac{\Lambda + \omega}{\alpha_m}.
$$
This proves our theorem.
\end{proof}

Given model \eqref{eq:our:model} and the parameter 
values of Table~\ref{tab:our:values}, we carried out the simulation 
in \textsf{Matlab} (see Appendix~\ref{sec:appendix}). Since the required $T$ 
to smooth our differential equation was 81 observations, we needed to compress 
our observed real values (150 observations) in order to achieve a 
graphical comparison (see Figure~\ref{fig:5}).
\begin{figure}[!htb]
\centering
\includegraphics[scale=0.5]{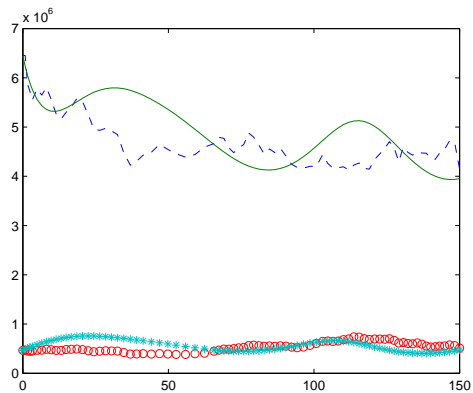}
\caption{\small Unemployed: real data (circle--red), 
model simulation (asterisk--light-blue).
Employed: real data (dashed--blue), model simulation (continuous--green).}
\label{fig:5}
\end{figure} 
As we can see, our model fits the observed data much better 
than the results obtained from previous models available 
in the literature. Even though there are a few opposite fluctuations, 
our model achieves a much more steady environment.


\subsection{Equilibrium analysis}
\label{sec:3.1}

To compute the equilibrium points of the proposed model \eqref{eq:our:model},
one needs to solve the following system:
\begin{eqnarray*}
\begin{cases}
\Lambda-\kappa U V- \alpha_1 U + \gamma E= 0,\\[0.3cm]
\omega+\kappa U V- \alpha_2 E - \gamma E-\delta E +\rho U =0.
\end{cases}
\end{eqnarray*}
Direct calculations show that there exists 
one equilibrium point $E_b=(U^{*},E^{*})$ only, given by
\begin{equation}
\label{eq:Equilibriumpoint}
\begin{gathered}
E^{*}=\frac{\alpha_1\omega+\Lambda\rho+\kappa(\omega+\Lambda)V}{(\alpha_1-\rho)\gamma
+(\alpha_2+\delta)\kappa V + \alpha_1(\alpha_2+\delta)},\\[0.3cm]
U^{*}=\frac{\Lambda(\delta+\alpha_2)+(\omega+\Lambda)\gamma}{(\alpha_1-\rho)\gamma
+(\alpha_2+\delta)\kappa V + \alpha_1(\alpha_2+\delta)}.
\end{gathered}
\end{equation}

\begin{remark}
All the parameters that appear in \eqref{eq:Equilibriumpoint} are strictly positive. 
Therefore, the numerators of $U^{*}$ and $E^{*}$ are always positive. 
The only possibility for $U^{*}$ and $E^{*}$ to be negative 
would be to have 
\begin{eqnarray*}
\alpha_1 < \rho \quad \text{and} \quad
(\alpha_2+\delta)\kappa V + \alpha_1(\alpha_2+\delta) < |(\alpha_1-\rho)\gamma|,
\end{eqnarray*}
which is not a reasonable scenario since the variable $V$, representing the available 
vacancies, is way bigger than all other parameters appearing in the denominators
of $U^{*}$ and $E^{*}$, reasonably valued in the interval $[0, 1]$.
\end{remark}


\subsection{Stability analysis}

We now study the local stability of the equilibrium $E_b$ found 
in Section~\ref{sec:3.1}. To achieve this, we compute the so called 
variational matrix $M$ of our designated model \eqref{eq:our:model}:
\begin{eqnarray}
\label{eq:mat:M}
M=\begin{bmatrix}
-\kappa V - \alpha_1 & \gamma \\
\kappa V + \rho & -\alpha_2-\gamma-\delta
\end{bmatrix}.
\end{eqnarray}
The characteristic equation associated with our $2\times2$ matrix \eqref{eq:mat:M} is
\begin{eqnarray}
\label{eq:cp}
\lambda^2 + a_1 \lambda + a_2 = 0
\end{eqnarray}
with
\begin{equation}
\label{eq:coef}
\begin{split}
a_1 &= V k + \alpha_1 + \alpha_2+\delta+\gamma,\\
a_2 &= V\alpha_2 k+V\delta k+\alpha_1\alpha_2+\alpha_1\delta+\alpha_1\gamma-\gamma\rho.
\end{split}
\end{equation}
The Routh--Hurwitz criterion for second degree polynomials
asserts that \eqref{eq:cp} has all the roots in the left half plane (and the system is stable)
if and only if both coefficients \eqref{eq:coef} are positive \cite{Dorf:Bishop}.
We just proved the following result.

\begin{theorem}
\label{thm:01}
The equilibrium $E_b=(U^{*},E^{*})$ given by \eqref{eq:Equilibriumpoint}
is locally asymptotically stable if and only if
$$
V\alpha_2 k+V\delta k+\alpha_1\alpha_2+\alpha_1\delta+\alpha_1\gamma > \gamma\rho.
$$
\end{theorem}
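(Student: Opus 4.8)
The plan is to apply the Routh--Hurwitz criterion for quadratic polynomials to the characteristic equation \eqref{eq:cp} of the variational matrix $M$ in \eqref{eq:mat:M}. For a monic quadratic $\lambda^2 + a_1\lambda + a_2$, all roots have negative real part if and only if $a_1 > 0$ and $a_2 > 0$; this is the standard criterion cited from \cite{Dorf:Bishop}. So the task reduces to checking when each of the two coefficients in \eqref{eq:coef} is positive.

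First I would observe that $a_1 = Vk + \alpha_1 + \alpha_2 + \delta + \gamma$ is automatically positive, since $V$ and all the model parameters $k, \alpha_1, \alpha_2, \delta, \gamma$ are nonnegative (indeed strictly positive, $V \geq 0$), so the condition $a_1 > 0$ imposes no restriction. Hence the stability of $E_b$ hinges entirely on the sign of $a_2 = V\alpha_2 k + V\delta k + \alpha_1\alpha_2 + \alpha_1\delta + \alpha_1\gamma - \gamma\rho$.

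Next I would simply rearrange $a_2 > 0$ into the stated form, moving the negative term $\gamma\rho$ to the right-hand side: $a_2 > 0 \iff V\alpha_2 k + V\delta k + \alpha_1\alpha_2 + \alpha_1\delta + \alpha_1\gamma > \gamma\rho$. Since the equivalence between stability and $(a_1 > 0 \text{ and } a_2 > 0)$ is an ``if and only if'' and $a_1 > 0$ holds unconditionally, the biconditional in the theorem follows directly.

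There is no real obstacle here — the only thing to be slightly careful about is justifying that $a_1 > 0$ always holds (so it can be dropped from the criterion), which requires noting that all parameters and the exogenous vacancy level $V$ are nonnegative by their modelling interpretation (Table~\ref{tab:our:model}), together with the earlier remark that the denominators are dominated by $V$. Everything else is the direct quotation of the Routh--Hurwitz criterion and an elementary algebraic rearrangement, which is exactly why the excerpt can legitimately say ``we just proved the following result.''
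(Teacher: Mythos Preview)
Your proposal is correct and follows exactly the paper's own argument: apply the Routh--Hurwitz criterion to the quadratic characteristic polynomial \eqref{eq:cp}, observe that $a_1>0$ is automatic from the positivity of the parameters, and reduce the stability criterion to $a_2>0$, which rearranges to the stated inequality. The only thing you add beyond the paper is the explicit justification that $a_1>0$ holds unconditionally, which the paper leaves implicit.
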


\begin{remark}
For the values used to describe the Portuguese reality of unemployment, 
one has $a_2 = 0.00000090\,V+ 0.0033239$, which is strictly positive 
because $V$ is always non-negative. It follows that
the unique equilibrium point of our system 
is locally asymptotically stable.
\end{remark}


\section{Optimal control}
\label{sec:4}

Portugal is a country with serious unemployment issues during the last decade
and the government of Portugal was forced to apply intervention policies 
in this particular area. Regarding this subject, the adoption of internships 
and hiring support measures (policies where the government contributes 
with a share of the worker's salary during a pre-established period, normally one year) 
have been in force since 1997, with variable magnitude until nowadays. 
Facing more severe unemployment problems at the 2007/2008 crisis, those measures 
became quite popular as a fundamental axle regarding the battle against unemployment 
and integration in the labour market of the recent graduates. With reference 
to the bibliography on this subject, there are a few empirical works that 
try to answer or explain the following difficult question: ``\emph{Does the supply 
of internships fight the long-term unemployment?}'' 
The work of Silva et al. (2016) \cite{silva} focus on
the impact of the internships in the unemployment of graduates compared 
to other age-similar groups. Another study, Barnwell (2016) \cite{Barn}, 
addresses the effectiveness of the internship component in the increasing 
employability of graduates. However, we are not aware of any empirical work 
that responds concretely to the aforementioned question. Using our representative model 
of the labor market reality, we now introduce two controls into \eqref{eq:our:model}:
\begin{eqnarray}
\label{eq:our:optmodel}
\begin{cases}
\displaystyle \frac{dU(t)}{dt}
=\Lambda-\kappa U(t) V(t)(1+u_2(t))- \alpha_1 U(t) + \gamma E(t) -u_1(t),\\[0.3cm]
\displaystyle \frac{dE(t)}{dt}
=\omega+\kappa U(t) V(t) (1+u_2(t))- \alpha_2 E(t) - \gamma E(t)-\delta E(t) +\rho U(t)+u_1(t).
\end{cases}
\end{eqnarray}
The first control function, $u_1$, refers to the unitary supply of internships or support measures; 
while the second control function, $u_2$, represents other alternative indirect measures such as 
lowering corporate tax rates. The offer of an internship, represented by the control measure $u_1$, 
has a direct or immediate impact based on the simple premise that an unemployed worker 
shifts to the employed group due to this incentive. This variable is scaled between 
$-40000$ and $40000$ because it is possible to add or withdraw internships 
already operating in the market. The cost of each internship is registered 
by the monetary value of the support plus all the administrative costs 
inherent to the planning and execution of the internship. 
The magnitude of indirect benefits, denoted by the control variable $u_2$, 
interacts directly with the exogenous function since those measures affect 
the natural creation of employment. We settled its value between $0$ and $1$, 
being its cost interpreted in the monetary unit (m.u.) of internships. 
The optimal control problem is as follows:
\begin{eqnarray}
\label{eq:our:funcional}
J[U(\cdot),u_1(\cdot),u_2(\cdot)]
=\int_0^{150} [A (U(t)-U(0)) + B u_1(t) + C u_2(t)] dt \longrightarrow \min
\end{eqnarray}
subject to the constraints on the control values
\begin{eqnarray}
\label{OCP:contCosnt}
-40000 \leq u_1(t) \leq 40000, \quad
0 \leq u_2(t) \leq 1, 
\quad t \in [0, 150],
\end{eqnarray}
the initial conditions
\begin{eqnarray}
\label{OCP:IC}
U(0)=464450, \quad E(0)=6450694,
\end{eqnarray}
the terminal conditions
\begin{eqnarray}
\label{OCP:TC}
5000000 \leq U(150)+E(150) \leq 8000000
\end{eqnarray}
and the state constraint
\begin{eqnarray}
\label{OCP:SC}
\frac{U(t)}{E(t) + U(t)} \leq 0.12,
\quad t \in [0, 150].
\end{eqnarray}
Since we know that the supply/withdraw of a unitary control $u_1$ represents 
a new employed/unemployed in the system, and a government financial cost/gain 
inherent to this measure, we set $B$ as the reference value equal to $1$. 
In order to settle the value of $A$, we establish an ideal ratio of 20 to 1, 
stating that an unemployed person has the similar cost as 19 new employees, 
using 5\% as a target of the utopian level of unemployment. The value of 
$C$ is set at 40000, representative of 40000 m.u. expressed in internship currency 
so that when $u_2$ is equal to 1 (maximum value) we are stating that we are 
investing the value/cost of 40000 internships in indirect measures. The constraint
\eqref{OCP:SC} keeps the unemployment rate below 12\% while \eqref{OCP:TC} assures
a reasonable level of active labour force between 5 and 8 million people. 
We have chosen the value 12\% since this is the minimum unemployment rate 
in the last 5 years of our real data. The initial conditions \eqref{OCP:IC} 
of employment/unemployment level agree with collected data.

The optimal control problem \eqref{eq:our:funcional}--\eqref{OCP:SC}
is far from trivial and we were not able to find its analytical solution 
through application of the Pontryagin Maximum Principle for optimal
control problems with state constraints. Instead, we solved 
the optimal control problem \eqref{eq:our:funcional}--\eqref{OCP:SC}
numerically using the ACADO Toolkit \cite{Houska2011a} -- see Appendix~\ref{sec:ACADO}.
The results are given in Figures~\ref{fig:7} and \ref{fig:6}.
As we see, looking to the graphics in Figure~\ref{fig:7}, 
\begin{figure}[!htb]
\centering
\includegraphics[scale=0.5]{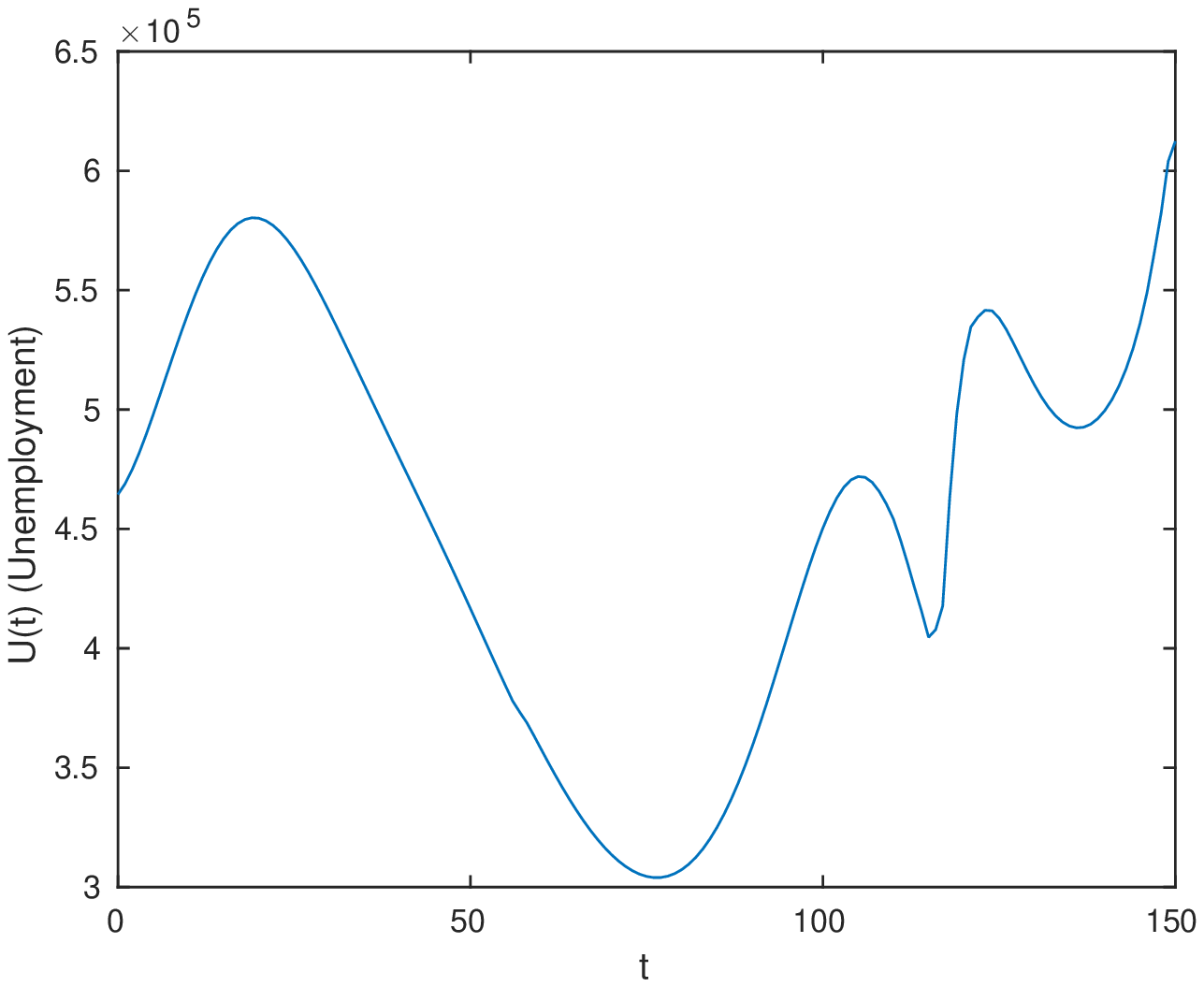} 
\includegraphics[scale=0.5]{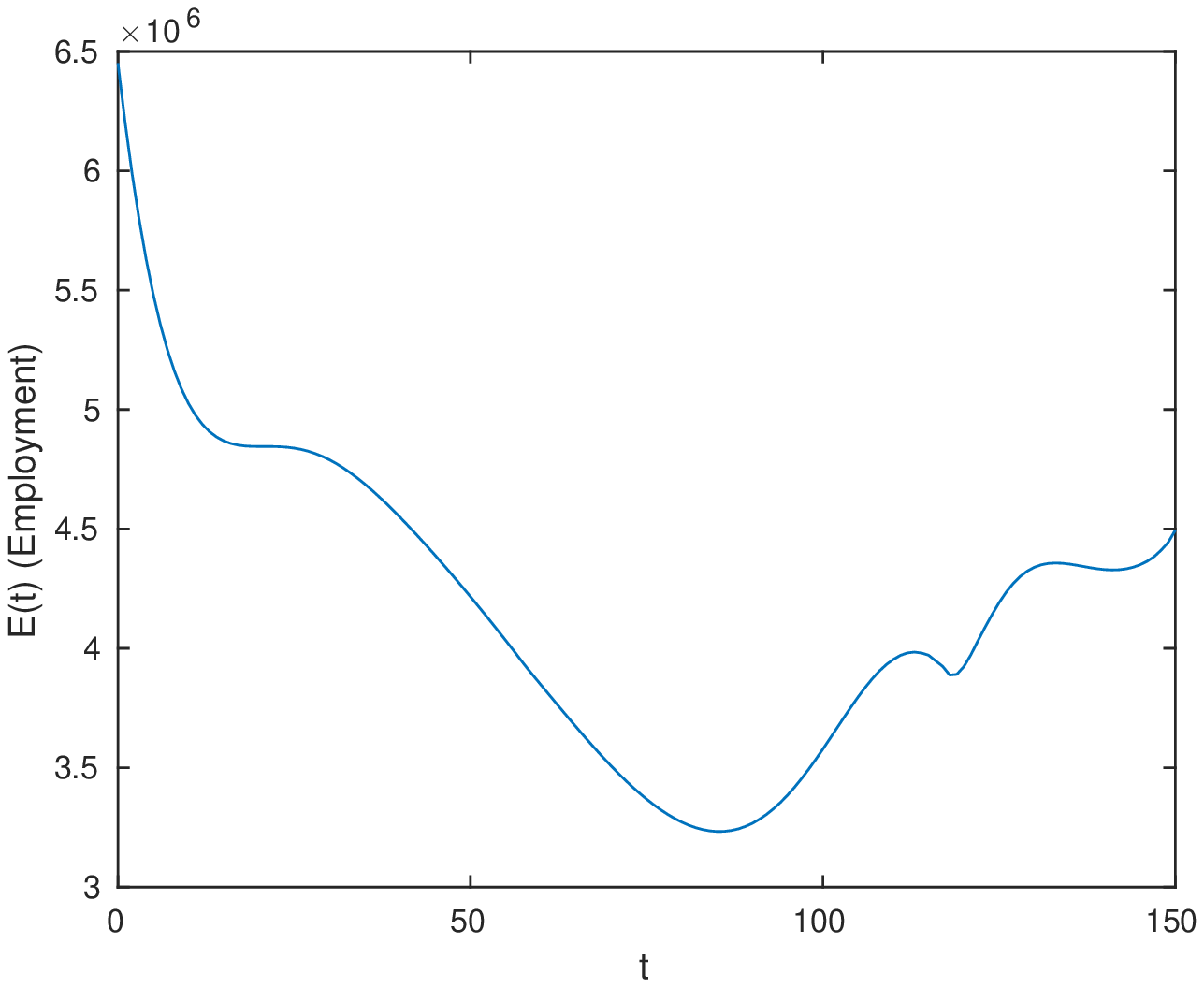}\\
\includegraphics[scale=0.5]{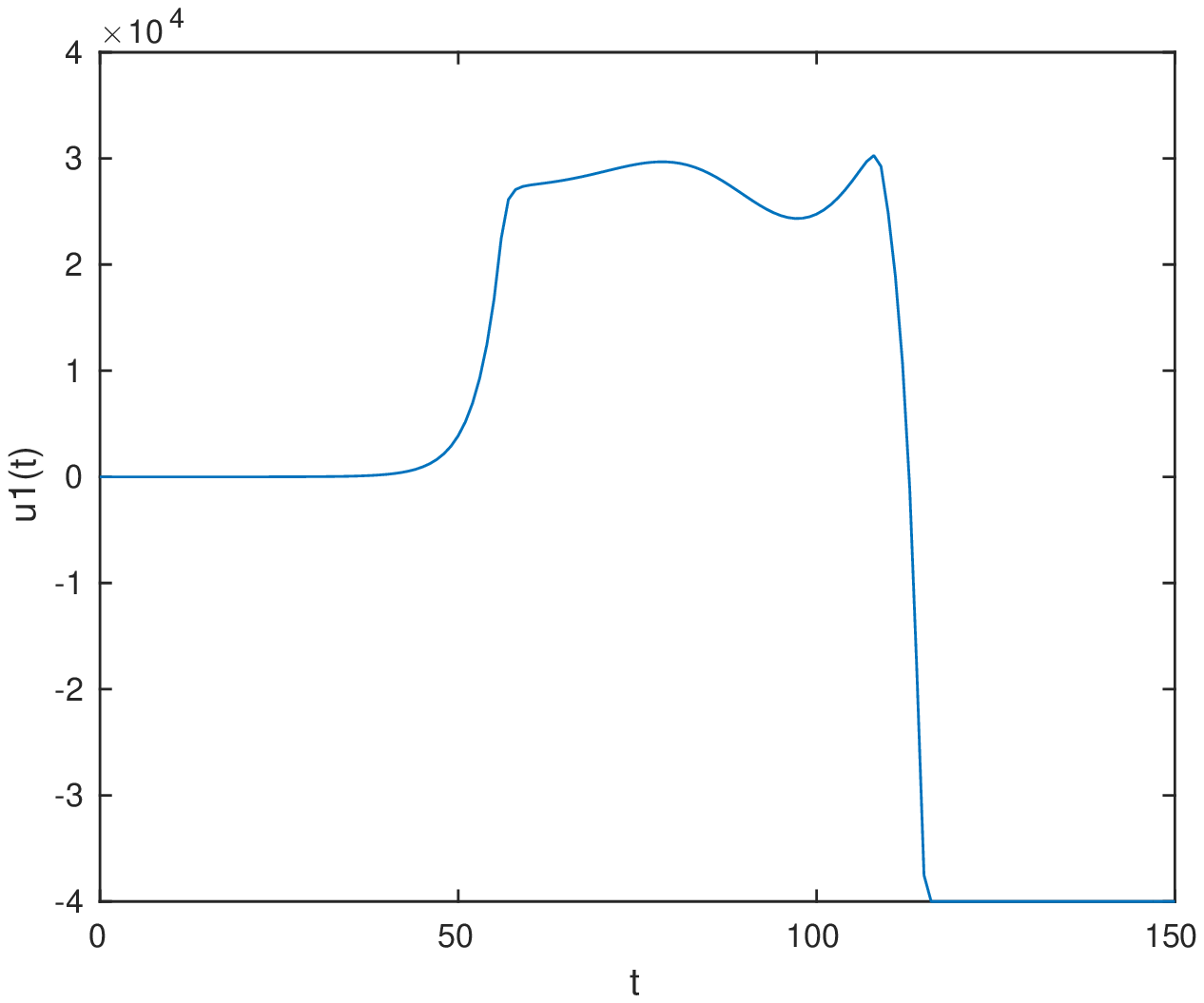}
\includegraphics[scale=0.5]{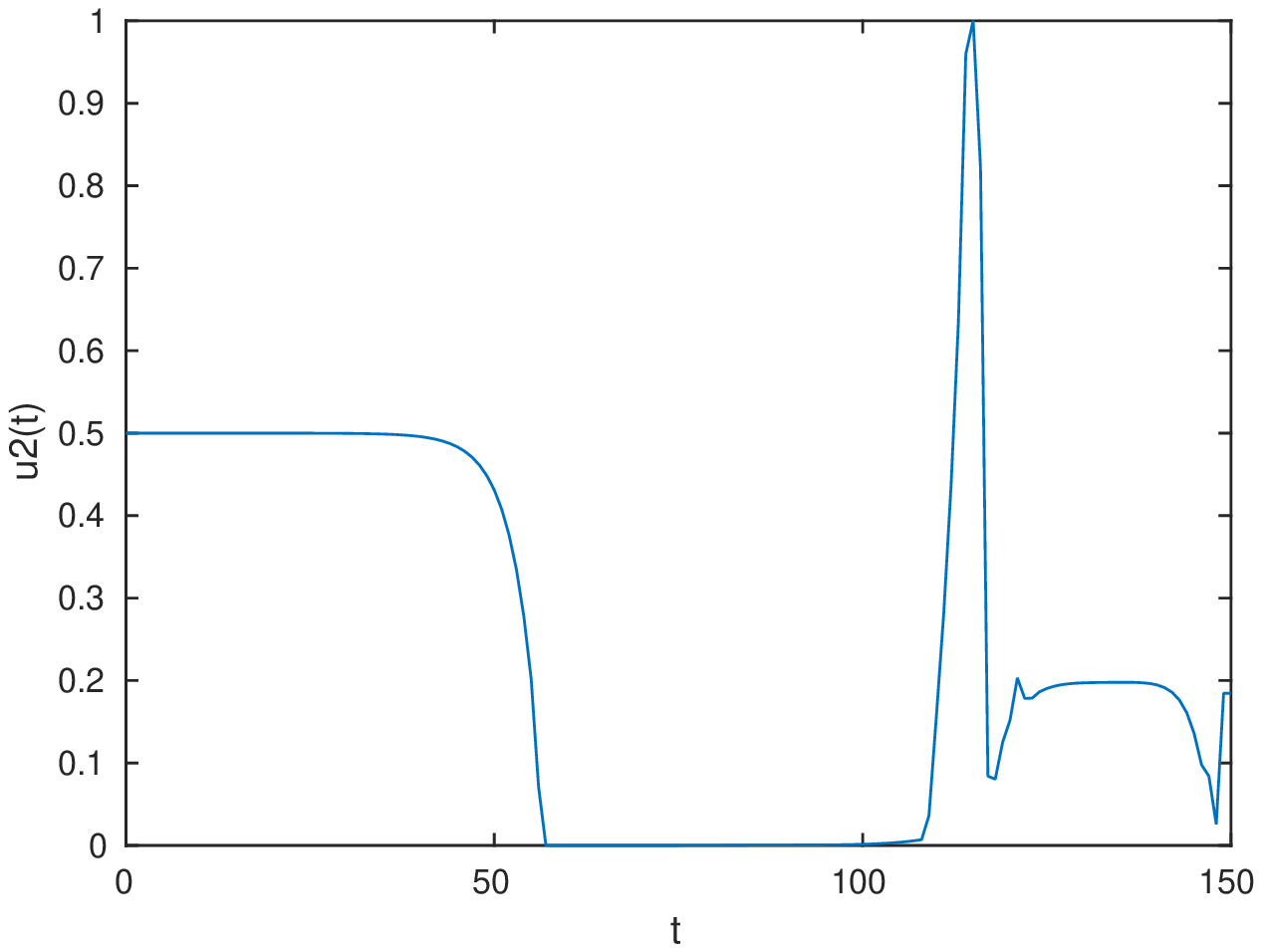}\\
\includegraphics[scale=0.5]{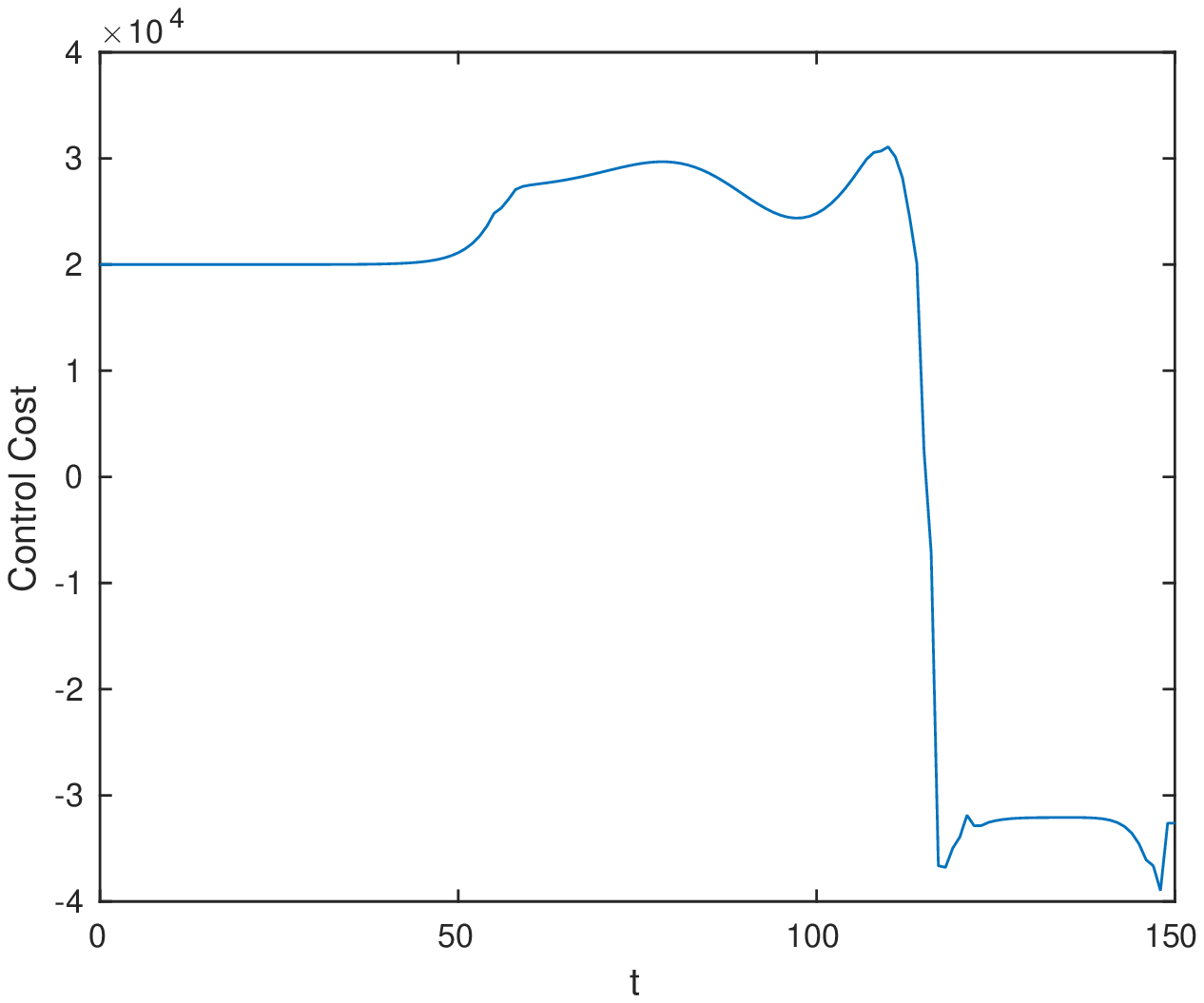}
\includegraphics[scale=0.5]{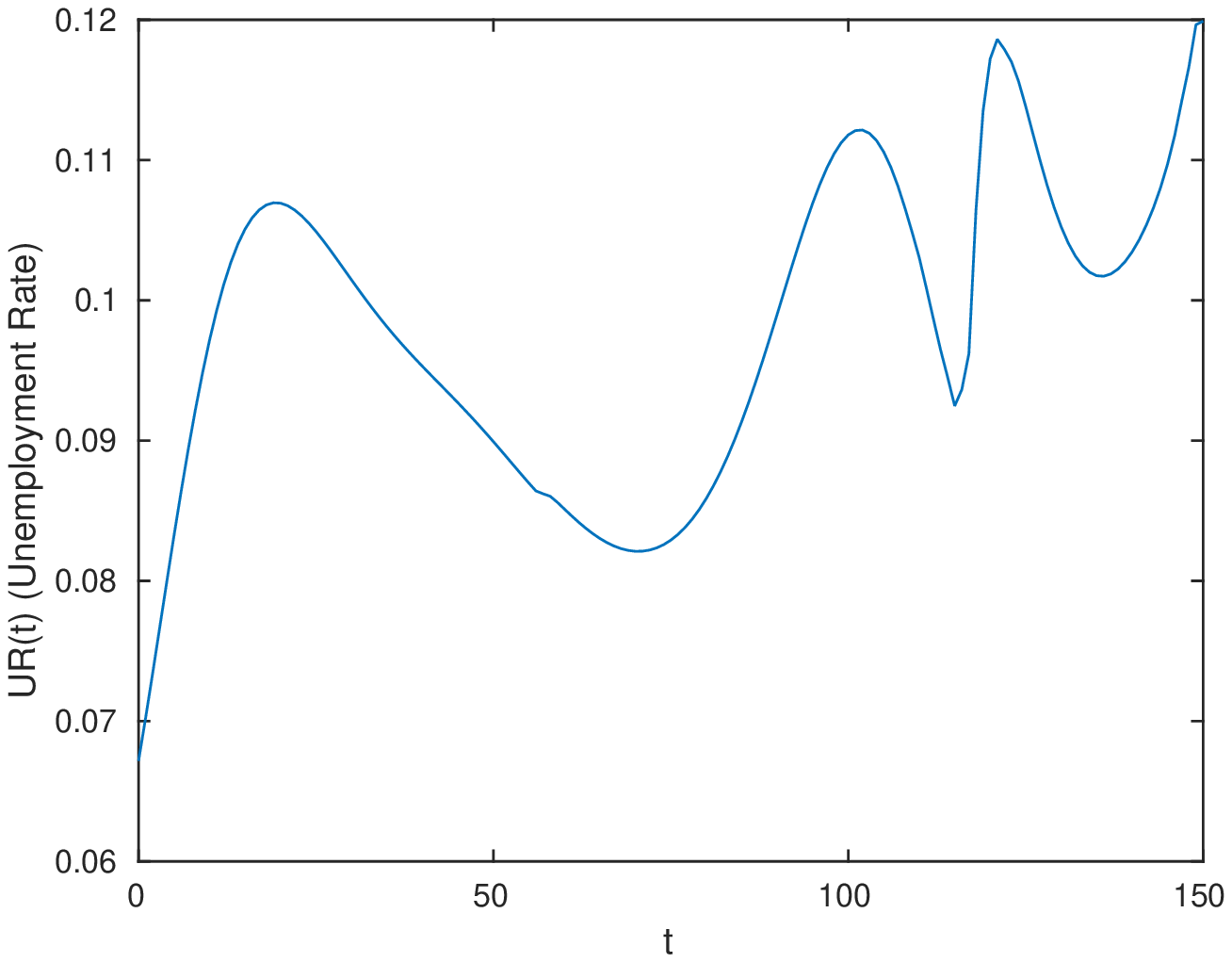}
\caption{\small Numerical solution to the optimal 
control problem \eqref{eq:our:funcional}--\eqref{OCP:SC}
with $A = 20$, $B = 1$ and $C = 40000$.}
\label{fig:7}
\end{figure} 
the model suggests a moderate (about $0.5$) adoption of the $u_2$ control 
from the beginning of the period until $t=50$, point at which the number 
of unemployed people approaches its minimum in our study. At this point 
it is suggested to switch the selected control: the $u_2$ control shrinks 
to zero while a substantial enlarged policy of internships ($u_1$) is suggested 
during the time-frame from $t=50$ until approximately $t=110$. 
The reason for this policy, during the period $t \in [70, 110]$, 
might be related to the employment minimum value. Finally, 
from $t=110$ until the end of the simulation, we assist to a new rise in unemployment levels. 
The optimal control approach points out a brief massive ($u_2=1$) followed by a moderate new
supply of indirect incentives $u_2$ (approximately 0.2) 
and an internship total contraction, that is, $u_1=-40000$. The total cost of applying 
the controls during our time-frame suggests a slight increase in the total investment 
up to $t=110$ culminating in a final saving and financial recovery. We note that
the application of controls slightly tip the expenditure in approximately 
1000 more internships (monthly) during the 150 month study period. 
\begin{figure}[!htb]
\centering
\includegraphics[scale=0.5]{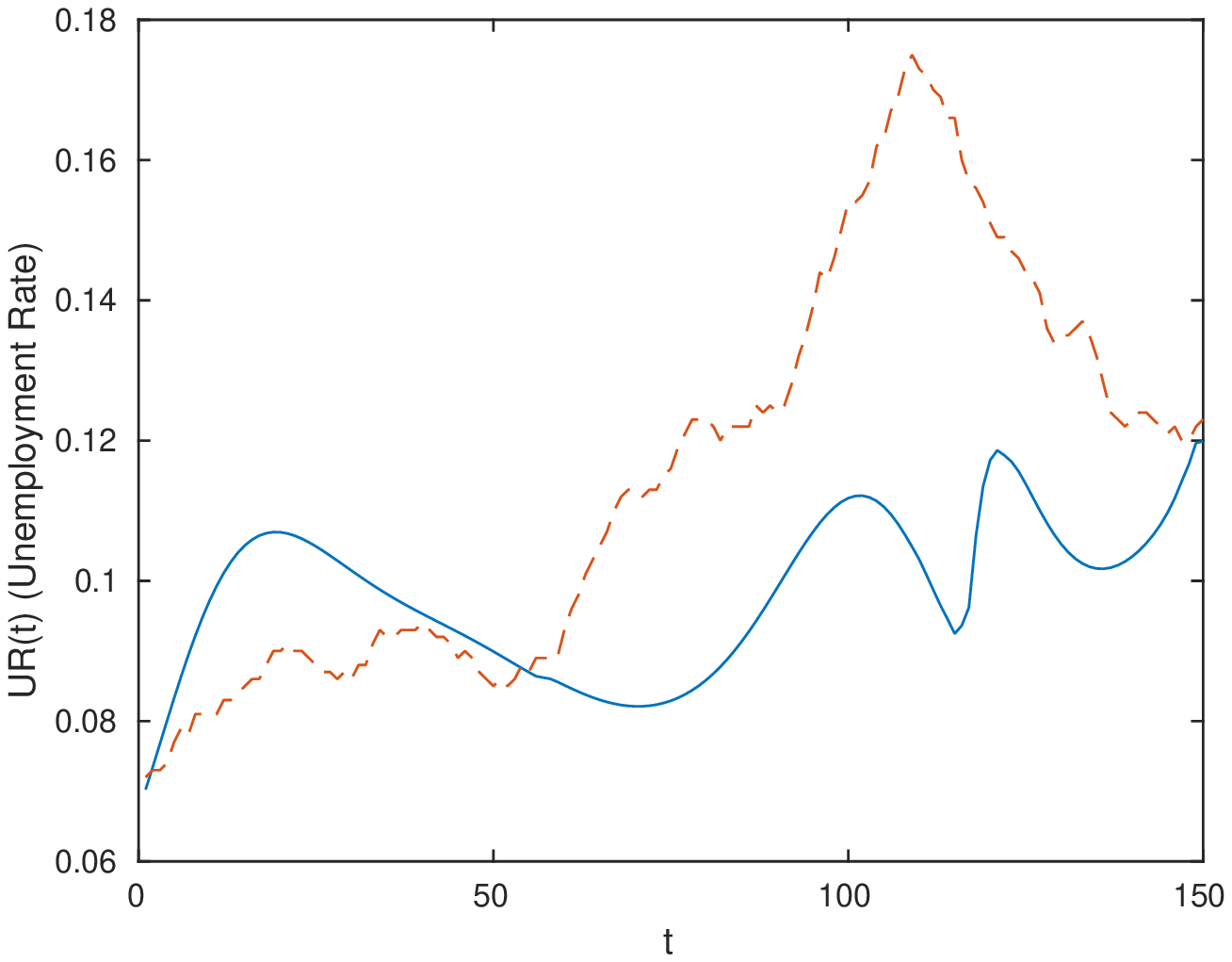}
\caption{\small Unemployment rate: real data (dashed line) 
versus optimal control simulation (continuous-line).}
\label{fig:6}
\end{figure}   
Finally, comparing the actual data with our optimal control simulation, the level of unemployment 
regarding the simulation surpasses slightly the real data from the beginning of the simulation 
until $t=60$. During the remaining period, the optimal control approach avoids the severe 
unemployment period from the actual Portuguese data between $t=100$ and $t=110$, 
which scores values above 17\%. Overall, and considering the mean over the analysed period, 
the simulation with optimal controls scores 9.8\% slightly better than the real data 
value of 11.5\%, as we see in Figure~\ref{fig:6}.


\section{Conclusion}
\label{sec:5}

In this work, we propose a simple mathematical model 
that describes more accurately the real data of unemployment 
from Portugal in the period 2004--2016. 
At first, we considered the proposed model from Munoli and Gani (2016) \cite{mg}, 
fitting their initial conditions with our data. We found out that the model 
suggested by Munoli and Gani in 2016 does not describes at all the Portuguese 
unemployment environment. Because of this, we performed 
a few changes to the Munoli and Gani model \cite{mg}, eliminating one 
of the differential equations, adding an unemployment/wage correlation 
feature, and adjusting the natural rate of unemployed/employed people 
in order to control and balance the inner population of the model. 
Simulations of our model show a much more accurate emulation of the 
Portuguese unemployment environment. Our results also show a slight 
decrease on the overall labour force (unemployed plus employed) and, 
since the Portuguese total population smoothly increased over the last decade, 
that might signals an higher percentage of inactive population, 
which may underpin pressure on more social protection measures. 
Other reading may consist on the premise that the number of unknown unemployed 
people is increasing and standing apart from the government official records.
From the application of optimal control, we can state the following
interesting conclusions: the indirect policies should be the predominant 
method of avoiding unemployment, whereas the supply of internships should 
be the main choice when the total level of employment offered is low. 
A possible explanation why to avoid internships in high unemployment 
periods might be correlated to the severe labour devaluation 
(considered when we developed the considered model) during these seasons, 
since the available labour force is already cheap enough and
the internships supply might be offering jobs with an increased 
government expenditure, due to the whole administrative costs, 
to people that might turned out to be employed anyway.
Moreover, our study also suggests that governmental policies 
should be performed mainly during favourable periods 
in order to avoid high unemployment levels in future crisis.   

As future work, we plan to apply different methods of optimal control
to our model in order to seek even more solid ways and tools to control 
the unemployment issue. For that, we may consider to split the unemployment 
class into two: unemployed that currently receive welfare from government; 
and unemployed which do not receive any financial support. 
These two classes present different government concerns: 
the first one emphasizes financial pressure and the second one social 
and well-being pressure. Another interesting study might be to include 
non-active population, like retired people, and study the optimal control 
regarding the social security financial health.


\section*{Acknowledgements}

The authors were supported by the \emph{Center for Research
and Development in Mathematics and Applications} (CIDMA)
of the University of Aveiro, through Funda\c{c}\~ao 
para a Ci\^encia e a Tecnologia (FCT), 
within project UID/MAT/04106/2013. 
They are very grateful to two referees, for several
comments, suggestions and questions, which helped them
to improve the quality of the paper.



\appendix


\section{MatLab code}
\label{sec:appendix}

We provide here our \textsf{MatLab} code for the simulation 
of the Munoli and Gani (2016) model \cite{mg} 
with Portuguese initial data:
{\small
\begin{verbatim}
f = @(t,x) [5000-0.000009*x(1)*x(3)-0.04*x(1)+0.001*x(2);
0.000009*x(1)*x(3)-0.05*x(2)-0.001*x(2);
0.05*x(2)+0.001*x(2)-0.05*x(3)+0.007*x(1)];
[t,xa] = ode45(f,[0 150],[464450 6450694 9625]);
plot(t,xa(:,1),t,xa(:,2),t,xa(:,3))
xlabel('t'),ylabel('[U,E,V](t)')
\end{verbatim}}
\noindent We obtained a $1737\times 3$ matrix (denoted above by \texttt{xa}).
Thus, we formatted the space in order to make the comparison 
with our data (a $150 \times 3$ matrix):
{\small
\begin{verbatim}
fxa = linspace(1,1737,150)
form = round(fxa)
nxa = xa(form,:)
\end{verbatim}}
\noindent After getting our new $3 \times 150$ matrix (we call it \texttt{nxa}),
we defined the time frame vector \texttt{T}:
{\small
\begin{verbatim}
T = [1:150]
\end{verbatim}}
\noindent The graphical comparison with Munoli and Gani (2016) model \cite{mg}
is then obtained. In what follows, the variables \texttt{DesemGlo}, \texttt{EmpreGlo} 
and \texttt{OfertasGlo} are vectors of dimension 150 (from January 2004 until June 2016), 
denoting, respectively, the total number of unemployed, employed and total vacancies available:
{\small
\begin{verbatim} 
plot(T,DesemGlo,T,nxa(:,1))
xlabel('Timeframe 2004/01 until 2016/06'),ylabel('Number of unemployed persons')
plot(T,EmpreGlo,T,nxa(:,2))
xlabel('Timeframe 2004/01 until 2016/06'),ylabel('Number of employed persons')
plot(T,OfertasGlo,T,nxa(:,3))
xlabel('Timeframe 2004/01 until 2016/06'),ylabel('Total Number of vacancies)
\end{verbatim}}
\noindent Using the MatLab fitting tool, we find a 3rd degree Fourier function 
that fits the total vacancies real data quite well:
{\small
\begin{verbatim}  
General model Fourier3:
f(x) =  a0 + a1*cos(x*w) + b1*sin(x*w) + a2*cos(2*x*w) + b2*sin(2*x*w) 
+ a3*cos(3*x*w) + b3*sin(3*x*w)
Coefficients (with 95% confidence bounds):
a0 =   1.478e+04  (1.444e+04, 1.512e+04)
a1 =       -1262  (-1841, -683.7)
b1 =       -2006  (-2469, -1543)
a2 =       328.2  (-988.4, 1645)
b2 =       -4700  (-5169, -4231)
a3 =       -1992  (-2474, -1510)
b3 =       2.399  (-1202, 1206)
 w =     0.04009  (0.03864, 0.04153)

Goodness of fit:
SSE: 5.995e+08
R-square: 0.8046
Adjusted R-square: 0.795
RMSE: 2055
\end{verbatim}}
{\small
\begin{verbatim}  
function [fitresult, gof] = createFit(T, Vacancies)
%CREATEFIT(T,VACANCIES)
%  Create a fit.
%
%  Data for 'VacanciesFit' fit:
%      X Input : T
%      Y Output: Vacancies
%  Output:
%      fitresult : a fit object representing the fit.
%      gof : structure with goodness-of fit info.
%
% Fit: 'VacanciesFit'.
[xData, yData] = prepareCurveData( T, Vacancies );
% Set up fittype and options.
ft = fittype( 'fourier3' );
opts = fitoptions( ft );
opts.Display = 'Off';
opts.Lower = [-Inf -Inf -Inf -Inf -Inf -Inf -Inf -Inf];
opts.StartPoint = [0 0 0 0 0 0 0 0.0421690289072455];
opts.Upper = [Inf Inf Inf Inf Inf Inf Inf Inf];

% Fit model to data.
[fitresult, gof] = fit( xData, yData, ft, opts );

% Plot fit with data.
figure( 'Name', 'VacanciesFit' );
h = plot( fitresult, xData, yData );
legend( h, 'Vacancies vs. T', 'VacanciesFit', 'Location', 'NorthEast' );
% Label axes
xlabel( 'T' );
ylabel( 'Vacancies' );
grid on

corr(RCU,RCE)
\end{verbatim}}


\section{ACADO code}
\label{sec:ACADO}

For the numerical solution of the optimal control problem 
\eqref{eq:our:funcional}--\eqref{OCP:SC}
described in Section~\ref{sec:4}, we used the ACADO Toolkit,  
which is a free software environment and algorithm collection 
for automatic control and dynamic optimization \cite{Houska2011a}:

{\small
\begin{verbatim}
#include <acado_toolkit.hpp>
#include <acado_gnuplot.hpp>

int main( ){

USING_NAMESPACE_ACADO

DifferentialState    x1,x2,x3 ; // the differential states
Control              u1,u2    ;     
IntermediateState    mu       ;

const double t_start = 0.0;
const double t_end   = 150.0;
const double T       = t_end - t_start;
const double a0  =  1.478e+04;  
const double a1  =  -1262;  
const double b1  =  -2006;  
const double a2  =   328.2;  
const double b2  =  -4700;  
const double a3  =  -1992;  
const double b3  =   2.399;  
const double w   =   0.04009; 

DifferentialEquation f( 0.0, T );     

//  -------------------------------------

OCP ocp(t_start,t_end,150);   

ocp.minimizeLagrangeTerm( 20*x1 - 20*x1(0) + u1 + 40000*u2 );

mu=a0+a1*cos(x3*w)+b1*sin(x3*w)+a2*cos(2*x3*w)
     +b2*sin(2*x3*w)+a3*cos(3*x3*w)+b3*sin(3*x3*w);

f << dot(x1) == 90000-(1+u2)*0.000009*x1*mu-0.04*x1+0.001*x2-u1;
f << dot(x2) == 50000+(1+u2)*0.000009*x1*mu-0.05*x2-0.06*x2-0.001*x2+0.7161*x1+u1;              
f << dot(x3) == 1;

ocp.subjectTo( f );  // minimize T s.t. the model
ocp.subjectTo( AT_START, x1 == 464450  );     
ocp.subjectTo( AT_START, x2 ==  6450694.0 );
ocp.subjectTo( AT_START, x3 ==  0 );
ocp.subjectTo( AT_END, 5000000 <= x1 + x2 <=  8000000 );
ocp.subjectTo( -40000 <= u1 <=  40000   );
ocp.subjectTo( 0 <= u2 <=  1   );
ocp.subjectTo( x1/(x1+x2) <=  0.12   );

//  -------------------------------------

GnuplotWindow window;
window.addSubplot( x1, "Unemployment" );
window.addSubplot( x2, "Employment"   );
window.addSubplot( u1, "u1"           );
window.addSubplot( u2, "u2" );
window.addSubplot( u1 + 40000*u2 , "Control Cost");
window.addSubplot( x1/(x1+x2), "unemployment rate");

OptimizationAlgorithm algorithm(ocp); // the optimization algorithm
algorithm.set( HESSIAN_APPROXIMATION, CONSTANT_HESSIAN );
algorithm.set( KKT_TOLERANCE,1e-2 );
algorithm.set(MAX_NUM_ITERATIONS, 25); 
algorithm << window;
algorithm.solve();                    // solves the problem.

algorithm.getDifferentialStates("states.csv");
algorithm.getControls("ctrl.csv");

return 0;
}
\end{verbatim}}



\begin{thebibliography}{xx}

\bibitem{Barn}
\newblock S.~Barnwell, 
\newblock \emph{Relationship between internships and employment competencies 
of degreed professionals who completed a college internship},
\newblock PhD thesis, Walden University, 2016.
\newblock \url{http://scholarworks.waldenu.edu/dissertations/2917/}

\bibitem{Dorf:Bishop}
\newblock R. C. Dorf and R. H. Bishop,
\newblock \emph{Modern Control Systems}, 
\newblock Prentice Hall, 2001.

\bibitem{hn} 
\newblock L.~Harding and M.~Neam\c{t}u,
\newblock \emph{A dynamic model of unemployment with migration and delayed policy intervention}, 
\newblock Comput. Econ., in press. 
\newblock DOI: 10.1007/s10614-016-9610-3	

\bibitem{Houska2011a}
\newblock B. Houska and H.~J. Ferreau and M. Diehl,
\newblock \emph{ACADO Toolkit -- An Open Source Framework for Automatic Control and Dynamic Optimization},
\newblock Optimal Control Appl. Methods {\bf 32} (2011), no.~3, 298--312.

\bibitem{ms1} 
\newblock A.~K.~Misra and A.~K.~Singh, 
\newblock \emph{A mathematical model for unemployment}, 
\newblock Nonlinear Anal. Real World Appl. {\bf 12} (2011), no.~1, 128--136.

\bibitem{ms2} 
\newblock A.~K.~Misra and A.~K.~Singh, 
\newblock \emph{A delay mathematical model for the control of unemployment}, 
\newblock Differ. Equ. Dyn. Syst. {\bf 21} (2013), no.~3, 291--307.

\bibitem{mg} 
\newblock S.~B.~Munoli and S.~Gani, 
\newblock \emph{Optimal control analysis of a mathematical model for unemployment}, 
\newblock Optimal Control Appl. Methods {\bf 37} (2016), no.~4, 798--806. 

\bibitem{nt} 
\newblock C.~V.~Nikolopoulos and D.~E.~Tzanetis, 
\newblock \emph{A model for housing allocation of a homeless population due to a natural disaster}, 
\newblock Nonlinear Anal. Real World Appl. {\bf 4} (2003), no.~4, 561--579. 

\bibitem{silva}
\newblock P.~Silva, B.~Lopes, M.~Costa, A.~I.~Melo, G.~P.~Dias, E.~Brito and D.~Seabra, 
\newblock \emph{The million-dollar question: can internships boost employment?}, 
\newblock Studies in Higher Education, in press.
\newblock DOI: 10.1080/03075079.2016.1144181

\bibitem{sa} 
\newblock N.~S\^{\i}rghi, M.~Neam\c{t}u and D.S.~Deac, 
\newblock \emph{A dynamic model for unemployment control with distributed delay},
\newblock Math. Meth. Fin. Bus. Admin. (2014), 42--48.

\bibitem{bport}
\newblock WWW, 
\newblock Banco de Portugal,
\newblock \url{https://www.bportugal.pt/}

\bibitem{iefp}
\newblock WWW, 
\newblock Instituto do Emprego e Forma\c{c}\~ao Profissional, 
\newblock \url{https://www.iefp.pt/}

\end{thebibliography}
\end{document}